\newtheorem{theorem}{Theorem}
\newtheorem{corollary}[theorem]{Corollary}
\newtheorem{lemma}[theorem]{Lemma}
\newtheorem*{claim*}{Claim}
\theoremstyle{definition}
\newtheorem{definition}[theorem]{Definition}
\newtheorem{example}[theorem]{Example}
\numberwithin{theorem}{section}
\begin{document}

\bibliographystyle{plain}

\title {Designs related through projective and Hopf maps}
\author {Ayodeji Lindblad}
\address{Department of Mathematics, Massachusetts Institute of Technology, Cambridge, Massachusetts 02139}
\email{ayodeji@mit.edu}
\maketitle

\begin{abstract}
We verify a construction which, for $\Bbb K$ the reals, complex numbers, quaternions, or octonions, builds a spherical $t$-design by placing a spherical $t$-design on each $\Bbb K$-projective or $\Bbb K$-Hopf fiber associated to the points of a $\lfloor t/2\rfloor$-design on a quotient projective space $\Bbb{KP}^n\neq\Bbb{OP}^2$ or sphere. This generalizes work of K\"{o}nig and Kuperberg, who verified the $\Bbb K=\Bbb C$ case of the projective settings, and of Okuda, who (inspired by independent observation of this construction by Cohn, Conway, Elkies, and Kumar) verified the $\Bbb K=\Bbb C$ case of the generalized Hopf settings.
\end{abstract}

\section{Background and main results}
\label{sec:intro}

Spherical $t$-designs are finite subsets of spheres introduced by Delsarte, Goethals, and Seidel \cite[Section 5]{Delsarte...77} which provide good global approximations of the spheres they lie on, in the sense that polynomials of degree at most $t$ have the same average value on a $t$-design as on the sphere. $t$-designs on the $d$-dimensional unit sphere $S^d$ in $\Bbb R^{d+1}$ were shown to exist for all $t$ and $d$ by Seymour and Zaslavsky \cite[Corollary 1]{SeymourZalavsky84} and later explicitly constructed by Bajnok as the direct product of $d-2$ weighted interval $t$-designs (i.e. finite subsets of the weighted interval which exactly average polynomials of degree at most $t$) and a regular $(t+1)$-gon \cite{Bajnok92}. Regular $(t+1)$-gons are known to be the only optimally small $t$-designs on $S^1$ for all $t$ \cite[Example 5.1.4]{Delsarte...77}, but in any other dimension, only a handful of optimally small spherical $t$-designs are known. 
It was only in the past decade that a landmark paper of Bondarenko, Radchenko, and Viazovska resolved the longstanding open problem of finding the asymptotic order as $t\to\infty$ of optimally small $t$-designs on $S^d$ for $d>1$. Specifically, these authors proved that there exist $t$-designs on $S^d$ comprised of $N$ points for any $t$, $d$, and $N\geq C_dt^d$ ($C_d$ a constant depending on $d$) \cite[Theorem 1]{Bondarenko...13}, showing that lower bounds \cite[Theorems 5.11, 5.12]{Delsarte...77} on the sizes of spherical $t$-designs provided by Delsarte, Goethals, and Seidel are asymptotically optimal as $t\to\infty$. Considerable exploration of the properties of spherical designs---much of which is presented in a survey article by Bannai and Bannai \cite{BannaiBannai09}---has been done past these results since their introduction.

Projective $t$-designs are finite subsets of projective spaces which, in analogy with spherical $t$-designs, exactly average degree at most $t$ polynomials on the projective spaces they lie on. These objects were first considered when Neumaier \cite{Neumaier81} introduced designs on \emph{Delsarte spaces}, a class of metric spaces including projective spaces. As in the case of spherical designs, it follows from the work of Seymour and Zaslavsky \cite[Main Theorem]{SeymourZalavsky84} that $t$-designs exist on any projective space for any $t$. Following the results of Bondarenko, Radchenko, and Viazovska in the spherical setting \cite[Theorem 1]{Bondarenko...13}, Etayo, Marzo, and Ortega-Cerd\`a proved that there exist $t$-designs on any $d$-dimensional algebraic manifold $M$ (so notably, on any projective space of real dimension $d$) comprised of $N$ points for any $t$, $d$, and $N\geq C_Mt^d$ ($C_M$ a constant depending on $M$) \cite[Theorem 2.2]{Etayo...18}, showing that lower bounds of Dunkl \cite{Dunkl79} and of Bannai and Hoggar \cite[Theorem 1]{BannaiHoggar85} 
on the sizes of $t$-designs on complex and quaternionic projective spaces analogous to the bounds of Delsarte, Goethals, and Seidel \cite[Theorems 5.11, 5.12]{Delsarte...77} in the spherical setting are asymptotically optimal as $t\to\infty$.

Spherical and complex projective designs can be related by the construction introduced by K\"{o}nig \cite[Corollary 1]{Koning98} and further investigated by Kuperberg \cite[Theorem 4.1]{Kuperberg06} which builds a $t$-design on $S^{2n+1}$ by placing the vertices of a regular $(t+1)$-gon on each fiber of the complex projective map associated to a $\lfloor t/2\rfloor$-design on $\Bbb{CP}^n$. A related construction which builds a $t$-design on $S^3$ by placing the vertices of a regular $(t+1)$-gon on each fiber of the Hopf map associated to a $\lfloor t/2\rfloor$-design on $S^2$ was---inspired by the independent observation of the construction of K\"{o}nig by Cohn, Conway, Elkies, and Kumar \cite[Section 4]{Cohn...06}---verified by Okuda \cite[Theorem 1.1]{Okuda15}. We generalize these results to prove Theorem \ref{thm:bigthm}, whose main consequence is that one can build a spherical $t$-design by placing a spherical $t$-design on each fiber of the projective or generalized Hopf map associated to a $\lfloor t/2\rfloor$-design on a quotient real, complex, quaternionic, or octonionic projective space $\Bbb{KP}^n\neq\Bbb{OP}^2$ or sphere. This gives new constructions of $t$-designs on $S^{4n+3}$ from $\lfloor t/2\rfloor$-designs on $\Bbb{HP}^n$ and $t$-designs on $S^3$ for any $n\in\Bbb N_+$, $t$-designs on $S^7$ from $\lfloor t/2\rfloor$-designs on $S^4$ and $t$-designs on $S^3$, and $t$-designs on $S^{15}$ from $\lfloor t/2\rfloor$-designs on $\Bbb{OP}^1$ or $S^8$ and $t$-designs on $S^7$.

\begin{theorem}[Main Theorem]\label{thm:bigthm}
For $\Bbb K$ the reals $\Bbb R$, complex numbers $\Bbb C$, quaternions $\Bbb H$, or octonions $\Bbb O$ and $k:=\dim_\Bbb R\Bbb K-1$, take $\Pi:S^d\to\Sigma$ to be either the $\Bbb K$-projective map 
\begin{equation}\label{eq:proj}
\Pi_\Bbb K:S^{(k+1)n+k}\subset\Bbb K^{n+1}\to\Bbb{KP}^n,\quad\omega\mapsto[\omega]
\end{equation}
for $n\in\Bbb N$ ($n=1$ if $\Bbb K=\Bbb O$) or the $\Bbb K$-Hopf map
\begin{equation}\label{eq:hopf}
\pi_\Bbb K:S^{2k+1}\subset\Bbb K^2\to S^{k+1}\subset\Bbb R\times\Bbb K,\quad(a,b)\mapsto (|a|^2-|b|^2,2a\overline b).
\end{equation}
For weighted 0-designs $(Y,\lambda_Y)$ on $\Sigma$ and $(Z_y,\lambda_y)$ on $S^k$ alongside base points $z_y\in\Pi^{-1}(y)$ for each $y\in Y$, we consider
\begin{equation}\label{eq:X}
X:=\bigcup_{y\in Y}\{z_yz\:|\:z\in Z_y\},\quad\lambda(z_yz):=\lambda_Y(y)\lambda_y(z)\quad\text{\it for}\quad z\in Z_y.
\end{equation}
For any $t\in\Bbb N$, $(Y,\lambda_Y)$ is a weighted $\lfloor t/2\rfloor$-design on $\Sigma$ if $(X,\lambda)$ is a weighted $t$-design on $S^d$. Moreover, $(X,\lambda)$ is a weighted $t$-design on $S^d$ if $(Y,\lambda_Y)$ is a weighted $\lfloor t/2\rfloor$-design on $\Sigma$ and $(Z_y,\lambda_y)$ is a weighted $t$-design on $S^k$ for each $y\in Y$. If $|Z_y|$ is constant in $y\in Y$, the corresponding statements for unweighted designs (weighted designs with constant weight function) hold. 
\end{theorem}

An interesting consequence of two applications of Theorem \ref{thm:bigthm} is that, for $\Bbb L\subset\Bbb K$ the reals, complex numbers, or quaternions and $l:=\dim_\Bbb R\Bbb L-1$, we can build a $t$-design on $\Bbb {LP}^{(d-l)/(l+1)}$ by placing a $t$-design on $\Pi_{\Bbb L}(w)\cong\Bbb{LP}^{(k-l)/(l+1)}$ for each point $w\subset S^{d}$ of a $t$-design on $\Bbb{KP}^n$ as in \eqref{eq:X}.

We define the objects of interest in Subsection \ref{sub:prelims}. We then prove the projective settings of Theorem \ref{thm:bigthm} in Section \ref{sec:proofs} and use these results to verify the generalized Hopf settings of the theorem in Subsection \ref{sub:hopf}. Finally, we provide examples---some of which are known to be optimally small---of unweighted spherical $t$-designs built as in Theorem \ref{thm:bigthm} and discuss the sense in which the constructions of the theorem are each asymptotically optimal as $t\to\infty$ in Section \ref{sec:ex}.

\subsection{Spaces, maps, and designs}
\label{sub:prelims}

Consider the real numbers $\Bbb R$, the complex numbers $\Bbb C$, the quaternions $\Bbb H$, and the octonions $\Bbb O$, which Hurwitz's theorem \cite[Section 10]{Ebbinghaus...91} states are exactly the normed division algebras over the real numbers. These algebras are respectively generated over $\Bbb R$ 
by elements $\{e_0\}$, $\{e_0,e_1\}$, $\{e_0,e_1,e_2,e_3\}$, and $\{e_i\}_{i=0}^7$ satisfying $e_0^2=e_0$, $e_i^2=-e_0$ for $i\neq0$, and the relations illustrated by the Fano plane in Figure \ref{fig:fano}.
While $\Bbb R$ and $\Bbb C$ are commutative and associative, $\Bbb H$ is associative but noncommutative, and $\Bbb O$ is noncommutative and nonassociative but \emph{alternative}---i.e., for all $o_1,o_2\in\Bbb O$, $o_1(o_1o_2)=(o_1o_1)o_2$ and $o_1(o_2o_2)=(o_1o_2)o_2$ \cite{Baez01}. 

\begin{figure}
\begin{center}
\includegraphics[width=.6\textwidth]
{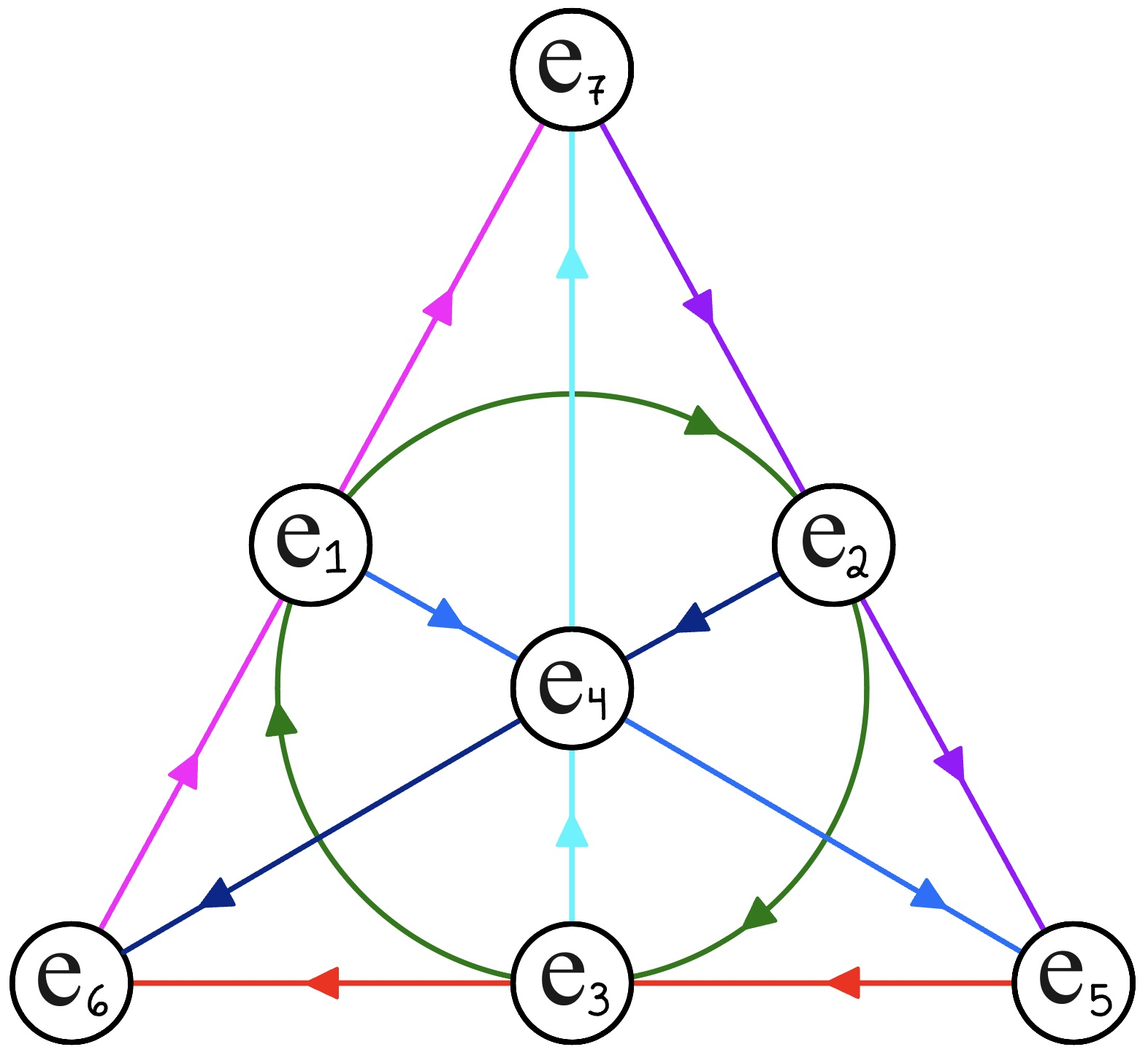}
\caption{\label{fig:fano} The Fano plane, which visualizes how to multiply octonions. We consider 7 lines in this picture: the 3 sides of the triangle, its 3 altitudes, and the circle inscribed in the triangle. If arrows point from $e_i$ to $e_j$ to $e_l$ along any one of these lines, we have $e_i e_j=-e_j e_i=e_l$.}
\end{center}
\end{figure}

Fix $m\in\Bbb N$. By $\Bbb R^{m+1}$, $\Bbb C^{m+1}$, $\Bbb H^{m+1}$, and $\Bbb O^{m+1}$, we respectively denote the $(m+1)$-dimensional real, complex, quaternionic, and octonionic vector spaces. $S^m$ is taken to be the $m$-dimensional unit sphere in $\Bbb R^{m+1}$ and is equipped with the uniform measure (meaning a measure on a metric space with respect to which any two balls of the same radius have the same measure) $\sigma^m$, which we abbreviate by $\sigma$ and normalize such that $\sigma(S^m)=1$. Then, picking
\begin{equation}\label{eq:Kknd}
\begin{gathered}
\Bbb K\in\{\Bbb R,\Bbb C,\Bbb H,\Bbb O\},\quad k:=\dim_\Bbb R(\Bbb K)-1,\\
n\in\Bbb N\quad(n=1\quad\text{\it{if}}\quad\Bbb K=\Bbb O),\quad d:=(k+1)n+k,
\end{gathered}
\end{equation}
$S^d$ is the unit sphere in $\Bbb{K}^{n+1}$. We define the $(k+1)n$-dimensional (over $\Bbb R$) $\Bbb K$-projective space $\Bbb{KP}^n$---the space of $\Bbb K$-lines in $\Bbb{K}^{n+1}$---using a right action of multiplication, setting
\begin{equation}\label{eq:KPn}
\Bbb{KP}^n:=\{[\omega]\:|\:\omega\in S^d\},\quad[\omega]:=\begin{cases}
\left\{\frac{(\omega\overline\omega_{n+1})\zeta}{|\omega_{n+1}|}\:\middle|\:\zeta\in S^k\right\} & \omega_{n+1}\neq0 \\
\{\omega\zeta\:|\:\zeta\in S^k\} & \omega_{n+1}=0
\end{cases}.
\end{equation}
Note that when $\Bbb K\in\{\Bbb R,\Bbb C,\Bbb H\}$, we may use the simpler presentation
\[[\omega]=\{\omega\zeta\:|\:\zeta\in S^k\}\]
by associativity of these algebras.
We equip $\Bbb{KP}^n$ with the uniform measure $\rho^n_\Bbb K$, which we abbreviate by $\rho$ and normalize such that $\rho(\Bbb{KP}^n)=1$.

Our definition of $\Bbb{OP}^1$ is equivalent to the more common presentation
\[\left\{\begin{bmatrix}a\\
b\end{bmatrix}\begin{bmatrix}\overline a & \overline b\end{bmatrix}\:\middle|\:(a,b)\in S^{15}\subset\Bbb O^2\right\}\]
of $\Bbb{OP}^1$ discussed by Baez \cite[Subsection 3.1]{Baez01}. Specifically, we can observe that $|a|^2=|\alpha|^2$, $a\overline b=\alpha\overline\beta$, and $|b|^2=|\beta|^2$ exactly when $[(a,b)]=[(\alpha,\beta)]$ for $(a,b),(\alpha,\beta)\in S^{15}$ using the fact that the subalgebra generated by any two elements of $\Bbb O$ is associative, as can be seen from alternativity of $\Bbb O$ combined with a lemma of Artin \cite[Section 3]{Schafer66}.
Baez also discusses why it is sensible to consider octonionic projective spaces $\Bbb{OP}^n$ for exactly $n\in\{1,2\}$. The proofs in Section \ref{sec:proofs} do not directly apply to give a construction which builds a $t$-design on $S^{23}$ from a $\lfloor t/2\rfloor$-design on $\Bbb{OP}^2$ and $t$-designs on $S^7$ because $\Bbb{OP}^2$ does not admit a presentation analogous to \eqref{eq:KPn}. Instead, they directly verify a construction which uses a map $\Pi_\Bbb O$ analogous to \eqref{eq:proj} from the set $S(\Bbb O^3_A)$ of triples $(a,b,c)\in S^{23}\subset\Bbb O^3$ satisfying $(ab)c=a(bc)$ to $\Bbb{OP}^2$ to relate $t$-designs on $S(\Bbb O^3_A)$ to $\lfloor t/2\rfloor$-designs on $\Bbb{OP}^2$ and $t$-designs on preimages of $\Pi_\Bbb O$. 

Consider the $\Bbb K$-projective maps \eqref{eq:proj} and $\Bbb K$-Hopf map \eqref{eq:hopf} as in Theorem \ref{thm:bigthm}. The measure $\rho$ we consider on $\Bbb{KP}^n$ is equivalent to the pushforward $(\Pi_\Bbb K)_*\sigma=\sigma\circ\Pi_\Bbb K^{-1}$ of the measure $\sigma$ we consider on $S^d$ by the $\Bbb K$-projective map, as can be seen since $(\Pi_\Bbb K)_*\sigma$ will be uniform by symmetry and will be normalized such that $((\Pi_\Bbb K)_*\sigma)(\Bbb{KP}^n)=\sigma(S^d)=1$. Additionally, we have $\pi_\Bbb K=h_{\Bbb K}\circ\Pi_\Bbb K$, where
\begin{equation}\label{eq:hK} 
h_{\Bbb K}:\Bbb{KP}^1\to S^{k+1}\subset\Bbb R\times\Bbb K,\quad[(a,b)]\mapsto(|a|^2-|b|^2,2a\overline b)
\end{equation}
is a diffeomorphism identifying $\Bbb{KP}^1$ with $S^{k+1}$. We note that $\sigma=(h_{\Bbb K})_*\rho$, so we also have $\sigma^{k+1}=(\pi_\Bbb K)_*\sigma^{2k+1}$.
Preimages of the $\Bbb K$-projective and $\Bbb K$-Hopf maps can be seen from \eqref{eq:proj} to each arise as the unit sphere in a subspace of $\Bbb K^{n+1}$ of dimension 1 over $\Bbb K$. We equip these fibers with the uniform measure $\sigma$, which we normalize such that each fiber has volume 1.

The space of real-valued polynomials on $\Bbb R^{m+1}$ is the closure of the constant and coordinate functions $\Bbb R^{m+1}\to\Bbb R$ under addition and multiplication. We then say that $P_t(\Bbb R^{m+1})$ is the space of polynomials of degree less than or equal to $t$ on $\Bbb R^{m+1}$; i.e. polynomials whose terms are each the product of a constant function and at most $t$ coordinate functions. We then define
\begin{gather}\label{eq:Sdpoly}
P_t(S^m):=\{f|_{S^m}\:|\:f\in P_t(\Bbb R^{m+1})\},\\
\label{eq:KPnpoly}
P_t(\Bbb{KP}^n):=\{f:\Bbb{KP}^n\to\Bbb R\:|\:\Pi_\Bbb K^*f\in P_{2t}(S^d)\},
\end{gather}
where $\Pi_\Bbb K^*f$ is the pullback $f\circ\Pi_\Bbb K$ of $f$ by $\Pi_\Bbb K$. We consider coordinates in $\Bbb K$ for real-valued polynomials via the association $\Bbb K\cong\Bbb R^{k+1}$ which realizes $\Bbb K$ as a $(k+1)$-dimensional real vector space with basis vectors $\{e_i\}_{i=0}^k$, the standard generators for the algebra $\Bbb K$ over $\Bbb R$.

We do not use explicit spanning sets of polynomials on projective spaces in the proofs below, but note that Lyubich and Shatalova \cite[Main Theorem]{LyubichShatalova05} proved that for $\Bbb K\in\{\Bbb C,\Bbb H\}$ and where $\langle a,b\rangle=\sum_{i=1}^{n+1}a_i\overline{b_i}$ is the conventional inner product on $\Bbb K^{n+1}$, $P_t(\Bbb{KP}^n)$ is spanned over $\Bbb K$ by the functions
\[\left\{[\omega_1]\mapsto|\langle \omega_1,\omega_2\rangle|^s\:\middle|\:[\omega_2]\in\Bbb{KP}^n,\:s\in\{0,2,\dots,2t\}\right\}.\]

We now introduce weighted and unweighted $t$-designs, numerical integration formulas (also known as \emph{quadrature} or \emph{cubature rules}) which precisely average polynomials of degree $t$ or less.
\begin{definition} \label{def:weight}
Take $(\Omega,\mu)$ to be a sphere $(S^m,\sigma)$ or projective space $(\Bbb{KP}^n,\rho)$, so we note $\mu(\Omega)=1$.
The pair $(X,\lambda)$ of a finite subset $X\subset\Omega$ and a function $\lambda:X\to\Bbb R_+:=\{r\in\Bbb R\:|\:r>0\}$ is called a \emph{weighted t-design} on $\Omega$ if
\[\sum_{x\in X}\lambda(x)f(x)=\int_{\Omega}f\,d\mu\quad\text{\it for all}\quad f\in P_t(\Omega).\]
\end{definition}
We may call $(X,\lambda)$ a \emph{weighted spherical $t$-design} if $\Omega$ is a sphere and a \emph{weighted $\Bbb K$-projective $t$-design} if $\Omega$ is a $\Bbb K$-projective space.
Additionally, $X$ is called an \emph{unweighted $t$-design} (or simply a \emph{$t$-design}) if $\lambda$ is the constant function $|X|^{-1}$.

\begin{figure}
\begin{center}
\includegraphics[width=.8\textwidth]
{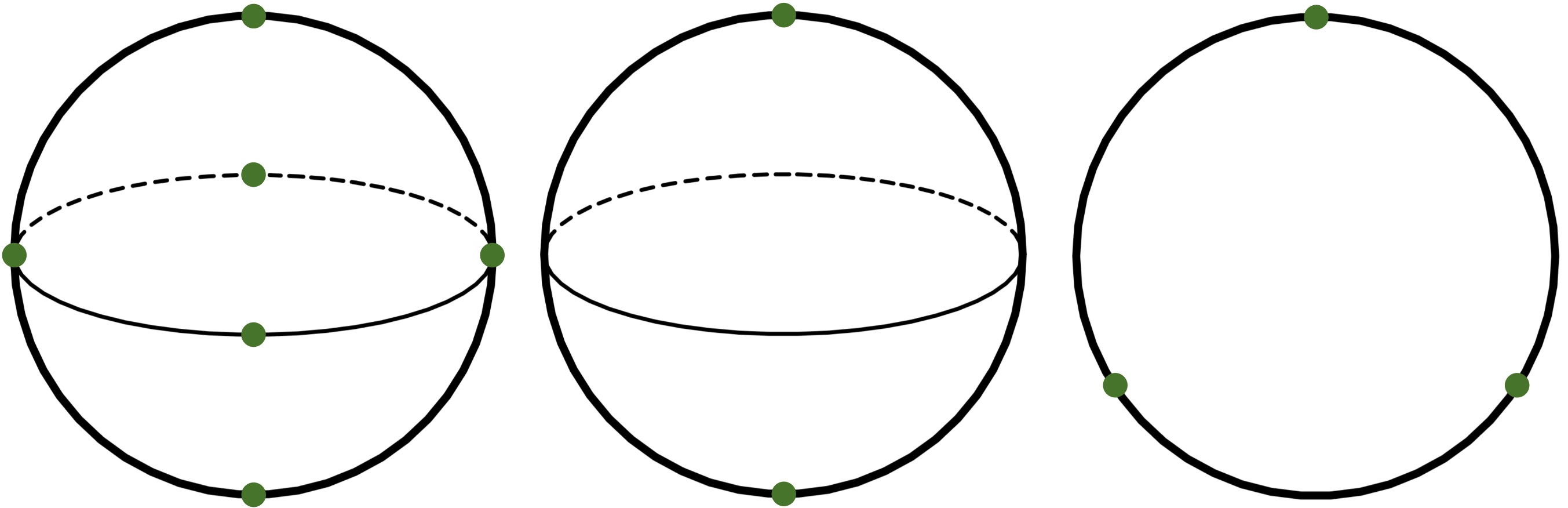}
\caption{\label{fig:designex} Left: vertices of an octahedron, a 3-design on $S^2$. Center: antipodal points, a 1-design on $S^2$. Right: vertices of a triangle, a 2-design on $S^1$.}
\end{center}
\end{figure}

Literature discussing projective designs often defines these objects as finite subsets of projective spaces on which the averages of certain Jacobi polynomials vanish. It was proven by Lyubich \cite[Proposition 2.2]{Lyubich09} that the projective settings of Definition \ref{def:weight} are equivalent to the definition more common in the literature. We note that the work of Lyubich has a typo: the word ``tight'' in their proposition should be omitted.

\section{Validity of the constructions}
\label{sec:proofs}

In this section, we prove Theorem \ref{thm:bigthm}.
We first verify Theorem \ref{thm:bigthm} when $\Pi$ is a projective map, then show in Subsection \ref{sub:hopf} that the $n=1$ cases of the projective settings give rise to the results of Theorem \ref{thm:bigthm} when $\Pi$ is a generalized Hopf map.

\begin{proof}[Proof of the projective settings of Theorem \ref{thm:bigthm}]
Taking $\Bbb K$, $k$, $n$ and $d$ as in \eqref{eq:Kknd} along with $t\in\Bbb N$, we consider weighted 0-designs $(Y,\lambda_Y)$ on $\Bbb{KP}^n$ and $(Z_y,\lambda_y)$ on $S^k$ alongside base points $z_y\in y$ for each $y\in Y$. We then define $(X,\lambda)$ as in \eqref{eq:X}.

Pick $g\in P_{\left\lfloor t/2\right\rfloor}(\Bbb{KP}^n)$.
Noting that $(\Pi_\Bbb K^*g)(z_yz)=g(y)$ for any $z\in Z_y$ and that $(Z_y,\lambda_y)$ is a 0-design on $S^k$ for each $y\in Y$, we have
\begin{equation}\label{eq:pig1}
\begin{split}
\sum\limits_{y\in Y}\sum_{z\in Z_y}\lambda_Y(y)\lambda_y(z)(\Pi_\Bbb K^*g)(z_yz)&=\sum\limits_{y\in Y}\sum_{z\in Z_y}\lambda_Y(y)\lambda_y(z)g(y) \\
&=\sum\limits_{y\in Y}\lambda_Y(y)g(y).
\end{split}
\end{equation}
Additionally, we may observe from the definition \eqref{eq:IK} of $I_\Bbb K$ that
\begin{equation}\label{eq:pig2}
g=(I_\Bbb K\circ\Pi_\Bbb K^*)g.
\end{equation}
Combining \eqref{eq:pig1} and \eqref{eq:pig2} with Definition \ref{def:weight}, we see that $(Y,\lambda_Y)$ is a weighted $\lfloor t/2\rfloor$-design on $\Bbb{KP}^n$ exactly when
\begin{equation}\label{eq:Yisadesign}
\sum\limits_{y\in Y}\sum_{z\in Z_y}\lambda_Y(y)\lambda_y(z)f(z_yz)=\int_{\Bbb{KP}^n}I_\Bbb Kf\,d\rho
\end{equation}
for all $f\in\Pi_\Bbb K^*\bigl(P_{\left\lfloor t/2\right\rfloor}(\Bbb{KP}^n)\bigr)$. Now, we also see from Definition \ref{def:weight} combined with the result
\[\int_{\Bbb{KP}^n}I_\Bbb Kf\,d\rho=\int_{S^d}f\,d\sigma\quad\text{\it{for}}\quad f\in L^1(S^d)\]
of Lemma $\ref{lem:int}$ that \eqref{eq:Yisadesign} being satisfied for $f\in P_t(S^d)$ is exactly the condition that $(X,\lambda)$ is a weighted $t$-design on $S^d$. Therefore, since we can see from the definition \eqref{eq:KPnpoly} of polynomials on projective spaces that
\[\Pi_\Bbb K^*\Bigl(P_{\left\lfloor t/2\right\rfloor}(\Bbb{KP}^n)\Bigr)\subseteq P_t(S^d),\]
we have shown that $(Y,\lambda_Y)$ is a weighted $\lfloor t/2\rfloor$-design on $\Bbb{KP}^n$ if $(X,\lambda)$ is a weighted $t$-design on $S^d$.

Now, say $(Z_y,\lambda_y)$ is a weighted $t$-design on $S^k$ for each $y\in Y$.
Picking $f\in P_t(S^d)$, $\eqref{eq:zetawpoly}$ in Lemma $\ref{lem:poly}$ shows that $f(z_{y}\zeta)\in P_t(S^k)$
for $y\in Y$. So, applying a change of variables $z_y\zeta\mapsto\zeta$ using \eqref{eq:zetawpush} followed by the definition \eqref{eq:IK} of $I_\Bbb K$, we have that
\[\sum_{z\in Z_y}\lambda_y(z)f(z_yz)=\int_{S^k}f(z_{y}
\zeta)\,d\sigma(\zeta)=\int_{y}f(\zeta)\,d\sigma(\zeta)=(I_{\Bbb K} f)(y)\]
for $y\in Y$, which shows that 
\begin{equation}\label{eq:sumequals}
\sum\limits_{y\in Y}\sum_{z\in Z_y}\lambda_Y(y)\lambda_y(z)f(z_yz)=\sum_{y\in Y}\lambda_Y(y)(I_{\Bbb K} f)(y).
\end{equation}
From $\eqref{eq:IKpoly}$ in Lemma $\ref{lem:poly}$, we see that $I_\Bbb Kf\in P_{\left\lfloor t/2\right\rfloor}(\Bbb{KP}^n)$, so combining \eqref{eq:sumequals} with \eqref{eq:pig1} for $g=I_\Bbb Kf$ gives us that
\begin{equation}\label{eq:bigsum}
\sum\limits_{y\in Y}\sum_{z\in Z_y}\lambda_Y(y)\lambda_y(z)f(z_yz)=\sum\limits_{y\in Y}\sum_{z\in Z_y}\lambda_Y(y)\lambda_y(z)((\Pi_\Bbb K^*\circ I_\Bbb K)f)(z_yz).
\end{equation}
Additionally, we observe from \eqref{eq:pig2} that
\begin{equation}\label{eq:bigint}
\int_{\Bbb{KP}^n}(I_\Bbb K\circ\Pi_\Bbb K^*\circ I_\Bbb K)f\,d\rho=\int_{\Bbb{KP}^n}I_\Bbb Kf\,d\rho.
\end{equation}
From \eqref{eq:bigsum} and \eqref{eq:bigint}, we see that \eqref{eq:Yisadesign} is satisfied for $f\in P_t(S^d)$ if it is satisfied for 
\[(\Pi_\Bbb K^*\circ I_\Bbb K)f\in\Pi_\Bbb K^*\Bigl(P_{\left\lfloor t/2\right\rfloor}(\Bbb{KP}^n)\Bigr),\]
so $(X,\lambda)$ is a weighted $t$-design on $S^d$ if $(Y,\lambda_Y)$ is a weighted $\lfloor t/2\rfloor$-design on $\Bbb{KP}^n$.
\end{proof}

Elements of the proof of the projective settings of Theorem \ref{thm:bigthm} were inspired by work of Okuda \cite[Lemma 3.9]{Okuda15} verifying the $\Bbb C$-Hopf setting of Theorem \ref{thm:bigthm}.
We now prove facts cited in this proof. With $\Bbb K$, $k$, $n$, and $d$ as in \eqref{eq:Kknd}, consider the left multiplication by a base point $z_w\in w$ map
\begin{equation} \label{eq:zetach}
\zeta_w:S^k\to w,\quad\zeta\mapsto z_w\zeta
\end{equation}
we define for each $w\in\Bbb{KP}^n$. Taking
\begin{equation}\label{eq:uvf}
U:=\{[\omega]\in\Bbb{KP}^n\:|\:\omega_{n+1}\neq0\}
\end{equation}
along with
\begin{equation}\label{eq:vf}
V:=\Pi_\Bbb K^{-1}(U)=\{\omega\in S^d\:|\:\omega_{n+1}\neq0\}
\end{equation}
and respectively equipping these spaces with the restrictions of the measures $\rho$ and $\sigma$ discussed in Subsection \ref{sub:prelims},
we consider a local trivialization 
\begin{equation}\label{eq:psiFdef}
\zeta_U:U\times S^k\to V,\quad([\omega],\zeta)\mapsto\frac{(\omega\overline\omega_{n+1})\zeta}{|\omega_{n+1}|}
\end{equation}
of the $\Bbb K$-projective map, where we equip $U\times S^k$ with the product measure $\mu:=\rho\times\sigma$. 

Observe that $\zeta_w$ pushes forward the uniform measure on $S^k$ to the uniform measure on $w\cong S^k$; i.e.
\begin{equation}\label{eq:zetawpush}
(\zeta_w)_*\sigma=\sigma\quad\text{\it for any}\quad w\in\Bbb{KP}^n.
\end{equation}
We now discuss how $\zeta_U$ pushes forward the product measure $\mu$ on $U\times S^k$ to the restriction of the uniform measure $\sigma$ on $S^d$ to $V$:
\begin{lemma}\label{lem:push}
We have
\begin{equation}\label{eq:zetaKpush}
(\zeta_U)_*\mu=\sigma.
\end{equation}
\end{lemma}

\begin{proof}
\eqref{eq:zetaKpush} will be satisfied exactly when 
\begin{equation}\label{eq:zetaK-1push}
(\zeta_U^{-1})_*(\sigma)=\mu=\rho\times\sigma.
\end{equation} 
We can see that
\[\zeta_U^{-1}:V\to U\times S^k,\quad \omega\mapsto\left([\omega],\frac{\omega_{n+1}}{|\omega_{n+1}|}\right)=:(\zeta_1^{-1},\zeta_2^{-1})(\omega),\]
so we have 
\[(\zeta_1^{-1})_*\sigma=\rho,\quad(\zeta_2^{-1})_*(\sigma^d)=\sigma^k,\]
as both pushforward measures are uniform by symmetry 
and normalized such that 
\[((\zeta_1^{-1})_*\sigma)(U)=1=\rho(U),\quad ((\zeta_2^{-1})_*\sigma)(S^k)=1=\sigma(S^k).\]
This exactly shows that \eqref{eq:zetaK-1push} is satisfied.
\end{proof}

Similarly to as was done by Okuda \cite[Lemma 4.2]{Okuda15}, we use \eqref{eq:zetawpush} and \eqref{eq:zetaKpush} alongside Fubini's theorem \cite{Fubini07} to show that, with $\Bbb K$, $n$, and $d$ as in \eqref{eq:Kknd}, taking the average of any Lebesgue integrable function $f$ over $S^d$ provides the same result as taking the average on $\Bbb{KP}^n$ of the function $(I_{\Bbb K}f)(w)$ which averages $f$ over each $\Bbb K$-projective fiber $w$. For such $f$ and $w$, recalling as in Subsection \ref{sub:prelims} that we normalize the measure $\sigma$ on $w$ such that $\sigma(w)=1$, we have
\begin{equation} \label{eq:IK}
(I_{\Bbb K}f)(w)=\int_{w}f\,d\sigma.
\end{equation}

\begin{figure}
\begin{center}
\includegraphics[width=.8\textwidth]
{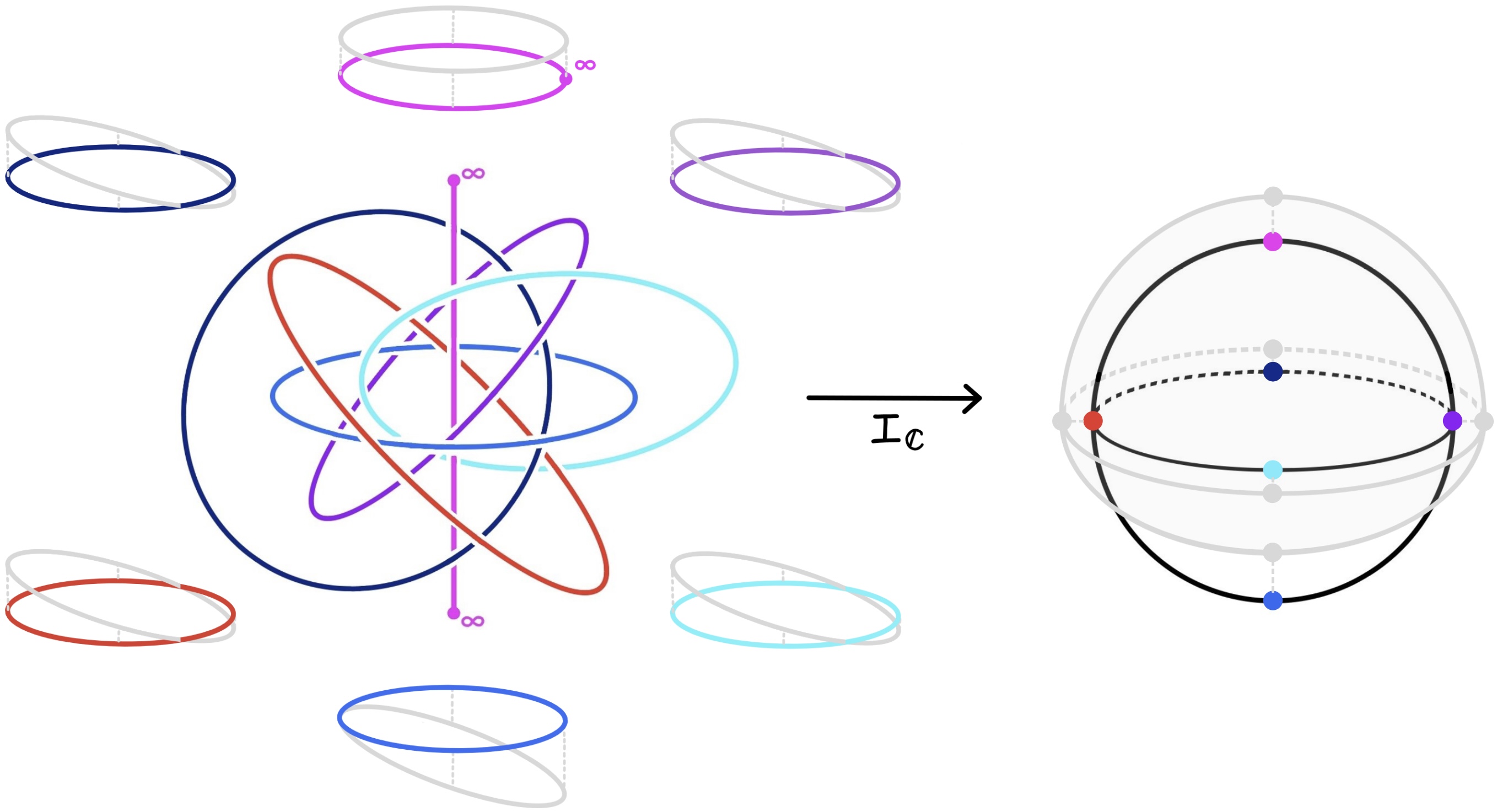}
\caption{\label{fig:PolyExample‎‎}
Taking coordinates $(a,b=b_1+ib_2)\in\Bbb C^2$ on $S^3\cong\Bbb{R}^3\cup\{\infty\}$ and $(\xi,\eta)\in\Bbb R\times\Bbb C$ on $\Bbb{CP}^1\cong S^2$, restrictions of $f=|a|^2-|b|^2+2|a|^2|b|^2-b_1$ to projective fibers are pictured in gray on the left and $(I_\Bbb Cf)(\xi,\eta)=\xi+\frac12|\eta|^2$ is visualized in gray on the right, with points on the right denoted by the same color as their corresponding projective fibers on the left.} \end{center}
\end{figure}

\begin{lemma}\label{lem:int}
For $f\in L^1(S^d)$, we have
\begin{equation}\label{eq:intequal}
\int_{\Bbb{KP}^n}I_\Bbb Kf\,d\rho=\int_{S^d}f\,d\sigma.
\end{equation}
\end{lemma}

\begin{proof}
Pick $f\in L^1(S^d)$ and recall the definitions \eqref{eq:uvf} and \eqref{eq:vf} of $U$ and $V$ respectively. We see using that $S^d\setminus V$ has measure zero in $S^d$ followed by a change of variables using \eqref{eq:zetaKpush} in Lemma \ref{lem:push} that
\begin{equation}\label{eq:mzerovar1}
\int_{S^d}f\,d\sigma=\int_{V}f\,d\sigma=\int_{U\times S^k}\zeta_U^*f\,d\mu.
\end{equation}
Since $f\in L^1(S^d)$ and $\mu=\rho\times\sigma$ is the product measure on $U\times S^k$, Fubini's theorem \cite{Fubini07} followed by the fact that, taking $z_{w}=\zeta_U(w,1)\in w$ for each $w\in\Bbb{KP}^n$, $\zeta_{w}=\zeta_U(w,\cdot)$ shows that
\begin{equation}\label{eq:fubini}
\int_{U\times S^k}\zeta_U^*f\,d\mu=\int_{U}\int_{S^k}\zeta_U^*f\,d\sigma\,d\rho=\int_{U}\int_{S^k}\zeta_w^*f\,d\sigma\,d\rho(w).
\end{equation}
Changing variables using \eqref{eq:zetawpush} and applying the definition \eqref{eq:IK} of $I_\Bbb K$ alongside the fact that $\Bbb{KP}^n\setminus U$ has measure zero in $\Bbb{KP}^n$, we have
\begin{equation}\label{eq:mzerovar2}
\int_{U}\int_{S^k}\zeta_w^*f\,d\sigma\,d\rho(w)=\int_{U}\int_{w}f\,d\sigma\,d\rho=\int_{\Bbb{KP}^n}(I_{\Bbb K} f)\,d\rho.
\end{equation}
Combining \eqref{eq:mzerovar1}, \eqref{eq:fubini}, and \eqref{eq:mzerovar2} then provides us with the desired result \eqref{eq:intequal}.
\end{proof}

Again consider $\Bbb K$, $k$, $n$, and $d$ as in \eqref{eq:Kknd} along with $t\in\Bbb N$. We now discuss how $I_{\Bbb K}$ as in \eqref{eq:IK} and the pullback operator $\zeta_w^*$ induced by $\zeta_w$ as in \eqref{eq:zetach} relate polynomials. 
\begin{lemma}\label{lem:poly}
We have
\begin{gather}
\zeta_w^*\bigl(P_t(S^d)\bigr)=P_t(S^k)\quad\text{\it for any}\quad w\in\Bbb{KP}^n,\label{eq:zetawpoly} \\
I_{\Bbb K}\bigl(P_t(S^d)\bigr)=P_{\left\lfloor t/2\right\rfloor}(\Bbb{KP}^n). \label{eq:IKpoly}
\end{gather}
\end{lemma}

\begin{proof}
Fix $w\in\Bbb{KP}^n$ and consider 
\[W:=\{\lambda\omega\:|\:\lambda\in\Bbb R,\:\omega\in w\}\cong\Bbb R^{k+1}.\]
Extending $\zeta_w:S^k\to w$ to the map $\zeta_w:\Bbb R^{k+1}\cong\Bbb K\to W$ taking $a\mapsto z_wa$, the definition of polynomials on $\Bbb R^{m+1}$ in Subsection \ref{sub:prelims} makes it clear that
\[\zeta_w^*\bigl(P_t(\Bbb R^{d+1})\bigr)=P_t(\Bbb R^{k+1}).\]
The definition \eqref{eq:Sdpoly} of polynomials on spheres then shows \eqref{eq:zetawpoly}.

Note that the definition \eqref{eq:IK} of $I_\Bbb K$ shows that $I_\Bbb K\circ \Pi_\Bbb K^*$ is the identity. Combining the definition of $P_{\left\lfloor t/2\right\rfloor}(\Bbb{KP}^n)$ with this fact then shows that
\[P_{\left\lfloor t/2\right\rfloor}(\Bbb{KP}^n)=I_{\Bbb K}(\Pi_\Bbb K^*(P_{\left\lfloor t/2\right\rfloor}(\Bbb{KP}^n)))\subseteq I_{\Bbb K}(P_{t}(S^d)).\]
To prove \eqref{eq:IKpoly}, we must then only show that $I_{\Bbb K}(P_{t}(S^d))\subseteq P_{\left\lfloor t/2\right\rfloor}(\Bbb{KP}^n)$, and the definition \eqref{eq:KPnpoly} of polynomials on $\Bbb{KP}^n$ shows that this will be true if 
\begin{equation}\label{eq:Pi*IKfanevenpoly}
\Pi_\Bbb K^*(I_\Bbb Kf)\in P_{2\left\lfloor t/2\right\rfloor}(S^d)
\end{equation}
for all $f\in P_t(S^d)$. Pick such $f$ to be a monomial of degree $s$. We can observe from the description of multiplication of elements of $\Bbb K$ in Subsection \ref{sub:prelims} that for any $\omega\in S^d\subset\Bbb K^{n+1}$ and $\zeta\in S^k\subset\Bbb K$, 
\[f(\omega \zeta)=\sum_{i=1}^{(k+1)^s}g_i(\omega)h_i(\zeta),\]
where $g_i$ and $h_i$ are monomials of degree $s$ on $S^d$ and $S^k$ respectively for all $i\in\{1,\dots,(k+1)^s\}$. Therefore, applying a change of variables $\zeta\mapsto\omega\zeta$ for any base point $\omega\in w\in\Bbb{KP}^n$ using \eqref{eq:zetawpush}, we see that
\[(I_\Bbb Kf)(w)=\int_wf(\zeta)\,d\sigma(\zeta)=\int_{S^k}f(\omega\zeta)\,d\sigma(\zeta)=\sum_{i=1}^{(k+1)^s}g_i(\omega)\int_{S^k}h_i(\zeta)\,d\sigma(\zeta).\]
Applying this fact, we see
\[(\Pi_\Bbb K^*(I_\Bbb Kf))(\omega)=(I_\Bbb Kf)[\omega]=\sum_{i=1}^{(k+1)^s}g_i(\omega)\int_{S^k}h_i(\zeta)\,d\sigma(\zeta)\in P_s(S^d).\]
Now, Lemma \ref{lem:*zero} shows that $\Pi_\Bbb K^*(I_\Bbb Kf)=0$ if $s=2\left\lfloor t/2\right\rfloor+1$, so \eqref{eq:Pi*IKfanevenpoly} is satisfied, completing the proof.
\end{proof}

\begin{lemma}\label{lem:*zero}
For any polynomial $f\in P_{2t+1}(S^d)$ with terms having only odd degree, $I_\Bbb Kf=0$.
\end{lemma}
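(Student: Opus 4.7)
The plan is to exploit a symmetry of the fibers: each projective fiber is invariant under the antipodal map $\omega\mapsto -\omega$ on $S^d$, and any polynomial whose terms all have odd degree is an odd function. Averaging an odd function over a set symmetric under negation will give zero.

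More concretely, I would proceed as follows. First, note that if $f\in P_{2t+1}^\Bbb F(S^d)$ has only odd-degree terms, then viewed as a restriction of an element of $P_{2t+1}^\Bbb F(\Bbb R^{d+1})$, each monomial satisfies $f(-\omega)=-f(\omega)$, so this holds on $S^d$ as well. Second, I would verify that each fiber $\Pi_\Bbb F^{-1}(w)$ is closed under $\omega\mapsto -\omega$: if $\omega\in \Pi_\Bbb F^{-1}(w)$, then using the base point $z_w\in \Pi_\Bbb F^{-1}(w)$ from \eqref{eq:zetach} we may write $\omega=z_w z$ for some $z\in S^k$; since $-1\in S^k$ and since distributivity in any $\Bbb F\in\mathcal N$ gives $z_w(-z)=-(z_w z)$, we have $-\omega=z_w(-z)\in \Pi_\Bbb F^{-1}(w)$.

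Third, I would observe that the spherical measure $\sigma$ on the fiber $\Pi_\Bbb F^{-1}(w)\cong S^k$ is invariant under negation: this is immediate from pulling back via $\zeta_w$, since the antipodal map on $S^k$ is an isometry and hence $\sigma$-preserving. Combining these three observations, I would compute
\[
(I_\Bbb F f)(w)=\frac1{|S^k|}\int_{S^k}f(z_w z)\,d\sigma(z)=\frac1{|S^k|}\int_{S^k}f(-z_w z)\,d\sigma(z)=-(I_\Bbb F f)(w),
\]
where the middle equality uses the substitution $z\mapsto -z$ and the final equality uses that $f$ is odd. Hence $(I_\Bbb F f)(w)=0$ for every $w\in\Bbb{FP}^n$.

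There is not much of an obstacle here; the only minor subtlety is ensuring that $z_w(-z)=-(z_w z)$ in the nonassociative octonionic case, but this is a consequence of distributivity alone (no associativity is needed) and so holds in all $\Bbb F\in\mathcal N$. Everything else is either linearity of integration or the straightforward invariance of the round measure under the antipodal map.
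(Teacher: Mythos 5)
Your proof is correct and takes essentially the same approach as the paper's: both exploit the oddness of $f$ together with the antipodal symmetry of each fiber via the substitution $z\mapsto -z$, concluding $(I_\Bbb F f)(w)=-(I_\Bbb F f)(w)$. The only differences are cosmetic --- you pull the integral back to $S^k$ via $\zeta_w$ and explicitly verify that the fiber is closed under negation (a point the paper leaves implicit), whereas the paper reduces to a monomial and performs the change of variables directly on the fiber inside $S^d$, checking that $\det(-I_{d+1})=1$.
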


\begin{proof}
Say $f\in P_{2t+1}(S^d)$ is as in the statement of the lemma. We assume without loss of generality that $f$ is a monomial of odd degree $s$. If $\Bbb K=\Bbb R$, we have 
\[(I_\Bbb Kf)([\omega])=\frac12(f(\omega)+f(-\omega))=0\] 
for any $\omega\in S^d$. Now, take $\Bbb K\in\{\Bbb C,\Bbb H,\Bbb O\}$, so $k+1=\dim_\Bbb R\Bbb K$ is even and thus $d+1=(k+1)(n+1)$ is even for any $n\in\Bbb N$. The antipodal map on $S^d$ has differential $-I_{d+1}$ (the negative of the identity on $\Bbb R^{d+1}$), so since
\[\det(-I_{d+1})=(-1)^{d+1}=1,\]
we see applying the change of variables $\zeta\mapsto-\zeta$ that
\[(I_\Bbb Kf)(w)=\int_wf(\zeta)\,d\sigma(\zeta)=(-1)^s\int_wf(\zeta)\,d\sigma(\zeta)=-(I_\Bbb Kf)(w)\]
because $f$ is a monomial of odd degree $s$.
So, we must have $I_{\Bbb K}f=0$.
\end{proof}

\subsection{The generalized Hopf settings}
\label{sub:hopf}

We show in this subsection that the results of Theorem \ref{thm:bigthm} which use generalized Hopf maps to relate designs are equivalent to the $n=1$ cases of the corresponding projective settings verified in Section \ref{sec:proofs}. To this end, pick $\Bbb K$, $k$, and $d$ as in \eqref{eq:Kknd} and recall the diffeomorphism $h_\Bbb K$ described in \eqref{eq:hK} which identifies $\Bbb{KP}^1$ with $S^{k+1}$.

\begin{lemma} \label{lem:hlem}
We have
\[h_{\Bbb K}^*\bigl(P_t(S^{k+1})\bigr)=P_t(\Bbb{KP}^1).\]
\end{lemma}
As Lemma \ref{lem:hlem} also implies that 
\[P_t(S^{k+1})=(h_{\Bbb K}^{-1})^*(P_t(\Bbb{KP}^1)),\]
the lemma combined with the fact that $(h_{\Bbb K})_*\mu=\sigma$ shows Corollary \ref{rmk:hcorr}.
\begin{corollary}\label{rmk:hcorr}
$(Y,\lambda_Y)$ is a weighted $t$-design on $\Bbb{KP}^1$ if and only if $(h_{\Bbb K}(Y),$ $(h_{\Bbb K}^{-1})^*\lambda_Y)$ is a weighted $t$-design on $S^{k+1}$.
\end{corollary}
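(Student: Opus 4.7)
The plan is to show that $h_\Bbb F$ translates the defining condition of a weighted $t$-design on $\Bbb{FP}^1$ verbatim into that on $S^{k+1}$. Since $h_\Bbb F$ is a homeomorphism, $y\mapsto h_\Bbb F(y)$ bijects $Y$ with $h_\Bbb F(Y)$, and the weight $(h_\Bbb F^{-1})^*\lambda_Y$ is precisely $\lambda_Y$ transported along this bijection. Thus the change of variable $x = h_\Bbb F(y)$ yields
\[\sum_{x\in h_\Bbb F(Y)}\bigl((h_\Bbb F^{-1})^*\lambda_Y\bigr)(x)\,g(x)=\sum_{y\in Y}\lambda_Y(y)\,(h_\Bbb F^*g)(y)\]
for any function $g$ on $S^{k+1}$.

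Next I would establish that $h_\Bbb F$ matches the normalized integrals, i.e. that
\[\frac1{|\Bbb{FP}^1|}\int_{\Bbb{FP}^1} h_\Bbb F^*g\,d\mu = \frac1{|S^{k+1}|}\int_{S^{k+1}} g\,d\sigma.\]
Equivalently, $(h_\Bbb F)_*\mu$ is a constant multiple of $\sigma$. I would argue this using the decomposition $\pi_\Bbb F=h_\Bbb F\circ\Pi_\Bbb F$ together with Lemma \ref{lem:int}: the pushforward $(\pi_\Bbb F)_*\sigma$ on $S^{k+1}$ must be invariant under the transitive symmetry group of $\pi_\Bbb F$, hence proportional to $\sigma$.

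Combining these two ingredients with Lemma \ref{lem:hlem}, which supplies a bijection $h_\Bbb F^*:P_t^\Bbb F(S^{k+1})\to P_t^\Bbb F(\Bbb{FP}^1)$, the identity
\[\sum_{y\in Y}\lambda_Y(y)f(y)=\frac1{|\Bbb{FP}^1|}\int_{\Bbb{FP}^1}f\,d\mu\quad\text{for all}\quad f\in P_t^\Bbb F(\Bbb{FP}^1)\]
transforms, via the substitution $f=h_\Bbb F^*g$, into the corresponding identity over all $g\in P_t^\Bbb F(S^{k+1})$. Since $\Bbb F$-valued polynomials may replace $\Bbb R$-valued ones in Definition \ref{def:weight}, this is exactly the claimed biconditional. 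The main obstacle I anticipate is the measure-matching step; for $\Bbb F\in\{\Bbb R,\Bbb C,\Bbb H\}$ this is a routine symmetry argument, but for $\Bbb F=\Bbb O$ one may want to verify carefully via the $\mathrm{Spin}(9)$-equivariance of the octonionic Hopf map.
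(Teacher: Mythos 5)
Your proposal is correct and follows the same route as the paper: Corollary \ref{rmk:hcorr} is deduced directly from Lemma \ref{lem:hlem}, which identifies $h_\Bbb F^*$ as a bijection $P_t^\Bbb F(S^{k+1})\to P_t^\Bbb F(\Bbb{FP}^1)$, combined with transporting the weighted sum and the normalized integral along the homeomorphism $h_\Bbb F$. The paper presents the corollary as an immediate consequence of that lemma and leaves the measure-matching step $(h_\Bbb F)_*\mu\propto\sigma$ implicit, so your explicit symmetry argument for it (including the $\mathrm{Spin}(9)$-equivariance in the octonionic case) is a worthwhile supplement rather than a departure.
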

Applying Corollary \ref{rmk:hcorr}, the $n=1$ case of the $\Bbb K$-projective setting of Theorem \ref{thm:bigthm} proves the corresponding $\Bbb K$-Hopf setting of the theorem.

\begin{proof}[Proof of Lemma \ref{lem:hlem}]
For any $(a,b)\in S^{2k+1}$, we note that 
\[w:=[(a,b)]=\pi_\Bbb K^{-1}(\pi_\Bbb K(a,b)).\]
Therefore, for any $g\in P_t(S^{k+1})$, we have from the definition \eqref{eq:IK} of $I_\Bbb K$ that
\[((I_\Bbb K\circ\pi_\Bbb K^*)g)(w)=\int_{\pi_\Bbb K^{-1}(\pi_\Bbb K(a,b))}\pi_\Bbb K^*g\,d\sigma=(\pi_\Bbb K^*g)(a,b).\]
We can therefore see from definitions \eqref{eq:hopf} of $\pi_\Bbb K$ and \eqref{eq:hK} of $h_\Bbb K$ that
\begin{equation}\label{eq:IKopiK*=hK*}
I_\Bbb K\circ\pi_\Bbb K^*=h_\Bbb K^*.
\end{equation}
We also can observe from the description of multiplication of elements of $\Bbb K$ in Subsection \ref{sub:prelims} 
that
\begin{equation}\label{eq:subset}
\pi_\Bbb K^*(P_t(S^{k+1}))\subseteq P_{2t}(S^{2k+1}).
\end{equation}
Combining \eqref{eq:IKopiK*=hK*} with \eqref{eq:subset} and \eqref{eq:IKpoly} in Lemma \ref{lem:poly}, we see that
\[h_\Bbb K^*(P_t(S^{k+1}))\subseteq I_\Bbb K(P_{2t}(S^{2k+1}))=P_t(\Bbb{KP}^1).\]
So, to complete the proof, we must only show that
\begin{equation}\label{eq:revinc}
h_\Bbb K^*\bigl(P_t(S^{k+1})\bigr)\supseteq P_t(\Bbb{KP}^1),
\end{equation}
which will in turn be true if
\begin{equation}\label{eq:pi*=p*}
\pi_\Bbb K^*(P_t(S^{k+1}))\supseteq \Pi_\Bbb K^*(P_t(\Bbb{KP}^1))
\end{equation}
since applying $I_\Bbb K$ to each side of \eqref{eq:pi*=p*} gives \eqref{eq:revinc} by \eqref{eq:IKopiK*=hK*} and the fact we may observe from the definition \eqref{eq:IK} of $I_\Bbb K$ that $I_\Bbb K\circ \Pi_\Bbb K^*$ is the identity map. Fix $f\in \Pi_\Bbb K^*(P_t(\Bbb{KP}^1))$ and note that $f\in P_{2t}(S^{2k+1})$ by the definition \eqref{eq:KPnpoly} of $P_t(\Bbb{KP}^1)$, so we may consider $f_{\alpha,\beta}\in P_{2t}(S^{2k+1})$ and constants $c_{\alpha,\beta}$ for $\alpha,\beta\in\Bbb N$ satisfying $\alpha+\beta\leq 2t$ such that 
\[f=\sum_{\alpha+\beta\leq2t}f_{\alpha,\beta},\quad |f_{\alpha,\beta}(a,b)|=c_{\alpha,\beta}|a|^\alpha|b|^\beta.\]
Again since $I_\Bbb K\circ \Pi_\Bbb K^*$ is the identity map, we see that $\Pi_\Bbb K^*\circ I_\Bbb K$ is the identity on $\Pi_\Bbb K^*(P_t(\Bbb{KP}^1))$. Also note that this operator is linear. Defining 
\[\widetilde f_{\alpha,\beta}:=(\Pi_\Bbb K^*\circ I_\Bbb K)f_{\alpha,\beta},\]
we therefore have that
\begin{equation}\label{eq:tildesum}
f=(\Pi_\Bbb K^*\circ I_\Bbb K)f=\sum_{\alpha+\beta\leq2t}\widetilde f_{\alpha,\beta}.
\end{equation}
Now, for any $\alpha$ and $\beta$ as above, we see from Lemma \ref{lem:estimate} that there exists $\widetilde c_{\alpha,\beta}\in[0,c]$ satisfying
\[|\widetilde f_{\alpha,\beta}(a,b)|=\widetilde c_{\alpha,\beta}|a|^{\alpha}|b|^{\beta}.\]
Assume without loss of generality that $\alpha\geq \beta$.
We may observe from the definition \eqref{eq:proj} of $\Pi_\Bbb K$ that
\[\widetilde f_{\alpha,\beta}(a,b)=\widetilde f_{\alpha,\beta}\left(|a|,\frac{b\overline a}{|a|}\right)=|a|^{\alpha-\beta}\widetilde f_{\alpha,\beta}(1,0,\dots,0,b\overline a).\]
Lemma \ref{lem:*zero} then guarantees that $\widetilde f_{\alpha,\beta}$ must have terms of only even degree, 
so we must have $\alpha-\beta=2\gamma_{\alpha,\beta}$ for some $\gamma_{\alpha,\beta}\in\Bbb N$. Therefore, 
\begin{equation*}
\begin{split}
\widetilde f_{\alpha,\beta}(a,b)&=|a|^{2\gamma_{\alpha,\beta}}\widetilde f_{\alpha,\beta}(1,0,\dots,0,b\overline a) \\
&=\left(\frac12(|a|^2-|b|^2)+\frac12\right)^{\gamma_{\alpha,\beta}} \widetilde f_{\alpha,\beta}(1,0,\dots,0,\overline{a\overline b}).
\end{split}
\end{equation*}
We thus have $\widetilde f_{\alpha,\beta}\in\pi_\Bbb K^*(P_{\alpha+\beta}(S^{k+1}))$ by the definition \eqref{eq:hopf} of $\pi_\Bbb K$, showing by linearity of $\pi_\Bbb K^*$ that $f\in \pi_\Bbb K^*(P_t(S^{k+1}))$.
\end{proof}

\begin{lemma} \label{lem:estimate}
For $\alpha,\beta\in\Bbb N$, $f\in P_{\alpha+\beta}(S^{2k+1})$, and $\widetilde f:=(\Pi_\Bbb K^*\circ I_\Bbb K)f$, if
\begin{equation}\label{eq:fdegs}
|f(a,b)|=c|a|^{\alpha}|b|^{\beta}
\end{equation}
for some $c\in\Bbb R$, we have
\begin{equation}\label{eq:tildefdegs}
|\widetilde f(a,b)|=\widetilde c|a|^{\alpha}|b|^{\beta}
\end{equation}
for some $\widetilde c\in[0,c]$.
\end{lemma}

\begin{proof}
Consider $f$ as in the statement of the lemma and label $w:=[(a,b)]$. 
For any 
\[z_w=(z_{w,1},z_{w,2})\in w,\]
we may apply a change of variables $\zeta\mapsto z_w\zeta$ using \eqref{eq:zetawpush} to see that
\begin{equation}\label{eq:pif1}
\begin{split}
|\widetilde f(a,b)|&=\left|\int_{w}f(\zeta)\,d\sigma(\zeta)\right|=\left|\int_{S^k}f(z_{w}\zeta)\,d\sigma(\zeta)\right|\leq\int_{S^k}|f(z_{w}\zeta)|\,d\sigma(\zeta).
\end{split}
\end{equation}
We then see from \eqref{eq:fdegs} that
\begin{equation}\label{eq:pif2}
\begin{split}
\int_{S^k}|f(z_{w}\zeta)|\,d\sigma(\zeta)&=\int_{S^k}c|z_{w,1}\zeta|^\alpha|z_{w,2}\zeta|^\beta\,d\sigma(\zeta) \\
&=c|z_{w,1}|^\alpha|z_{w,2}|^\beta\int_{S^k}|\zeta|^{\alpha+\beta}\,d\sigma(\zeta) \\
&=c|z_{w,1}|^\alpha|z_{w,2}|^\beta.
\end{split}
\end{equation}
The definition \eqref{eq:proj} of $\Pi_\Bbb K$ and the fact that $z_w\in w=[(a,b)]$ then demonstrate that $|z_{w,1}|=|a|$ and $|z_{w,2}|=|b|$. Combining this with \eqref{eq:pif1} and \eqref{eq:pif2}, we can see that
\begin{equation}\label{eq:lessthanpoly}
|\widetilde f(a,b)|\leq c|a|^\alpha|b|^\beta.
\end{equation}
Every (nonzero) term of $\widetilde f(a,b)$ must thus contain at least $\alpha$ multiples of $a$ or $\overline a$ and at least $\beta$ multiples of $b$ or $\overline b$, as \eqref{eq:lessthanpoly} would otherwise not be satisfied when $a\to0$ or $b\to0$.
Now, we can observe from the definition \eqref{eq:KPnpoly} of $P_t(\Bbb{KP}^1)$ and \eqref{eq:IKpoly} in Lemma \ref{lem:poly} that 
\[\widetilde f=(\Pi_\Bbb K^*\circ I_\Bbb K)f\in P_{\alpha+\beta}(S^{2k+1}),\]
so the degree of $\widetilde f$ does not exceed $\alpha+\beta$. Therefore, every (nonzero) term of $\widetilde f(a,b)$ must contain exactly $\alpha$ multiples of $a$ or $\overline a$ and exactly $\beta$ multiples of $b$ or $\overline b$. This fact combined with \eqref{eq:lessthanpoly} completes the proof.
\end{proof}

\section{Examples and optimality}
\label{sec:ex}

We now present examples of $t$-designs on $S^d$ which can be built using the constructions communicated by Theorem \ref{thm:bigthm}, remark on the sizes of such $t$-designs, and discuss the sense in which these constructions are asymptotically optimal as $t\to\infty$. Note that all examples provided using the complex Hopf or complex projective settings could also have been produced using the results of Okuda \cite[Theorem 1.1]{Okuda15} or K\"{o}nig \cite[Corollary 1]{Koning98} and Kuperberg \cite[Theorem 4.1]{Kuperberg06} respectively.

We use the fact that, for $N\geq t+1$, the vertices
\begin{equation}\label{eq:ngon}
V_N:=\bigl\{e^{2m\pi i/N}\:|\:m\in\{0,\dots,N-1\}\bigr\}
\end{equation}
of a regular $N$-gon are a $t$-design on $S^1$ \cite[Example 5.1.4]{Delsarte...77}.
We also note that we pick base points for the fibers of the $\Bbb K$-Hopf maps using the equation
\begin{equation}\label{eq:explicitfiber}
\pi_\Bbb K^{-1}(\xi,\eta)=
\begin{cases}
\Bigl\{\frac1{\sqrt2}\Bigl(\zeta\sqrt{1+\xi},\frac{\overline\eta \zeta}{\sqrt{1+\xi}}\Bigr)\:\Big|\:\zeta\in S^k\Bigr\}, & \xi\neq-1 \\
\{(0,\zeta)\:|\:\zeta\in S^k\}, & \xi=-1
\end{cases}.
\end{equation}

We first discuss an example remarked on by Okuda \cite[Example 2.4]{Okuda15}.
\begin{example} \label{ex:okudaex}
The set $A=\{(\pm1,0)\}$ of poles in $S^2\subset\Bbb R\times\Bbb C$ is a $1$-design. We also know that the vertices $V_3$ of an equilateral triangle are a $2$-design on $S^1$. We see from \eqref{eq:explicitfiber} that
\[\pi_\Bbb C^{-1}(1,0)=\{(\zeta,0)\:|\:\zeta\in S^1\},\quad\pi_\Bbb C^{-1}(-1,0)=\{(0,\zeta)\:|\:\zeta\in S^1\},\]
so the $\Bbb C$-Hopf setting of Theorem \ref{thm:bigthm} gives us that the set
\[X:=\bigl\{\bigl(z_1e^{2m\pi i/3},0\bigr),\bigl(0,z_2e^{2m\pi i/3}\bigr)\:\big|\:m\in\{0,1,2\}\bigr\}\subset S^3\subset\Bbb C^2\]
is a $2$-design on $S^3$ for any $z_1,z_2\in S^1$. Specifically, we see that
\[\bigl\{(1,0),\bigl(e^{2\pi i/3},0\bigr),\bigl(e^{4\pi i/3},0\bigr),(0,1),\bigl(0,e^{2\pi i/3}\bigr),\bigl(0,e^{4\pi i/3}\bigr)\bigr\}\]
is a $2$-design on $S^3$.
This is visualized in Figure \ref{fig:HopfExample}.
\end{example}

\begin{figure}
\begin{center}
\includegraphics[width=.85\textwidth]
{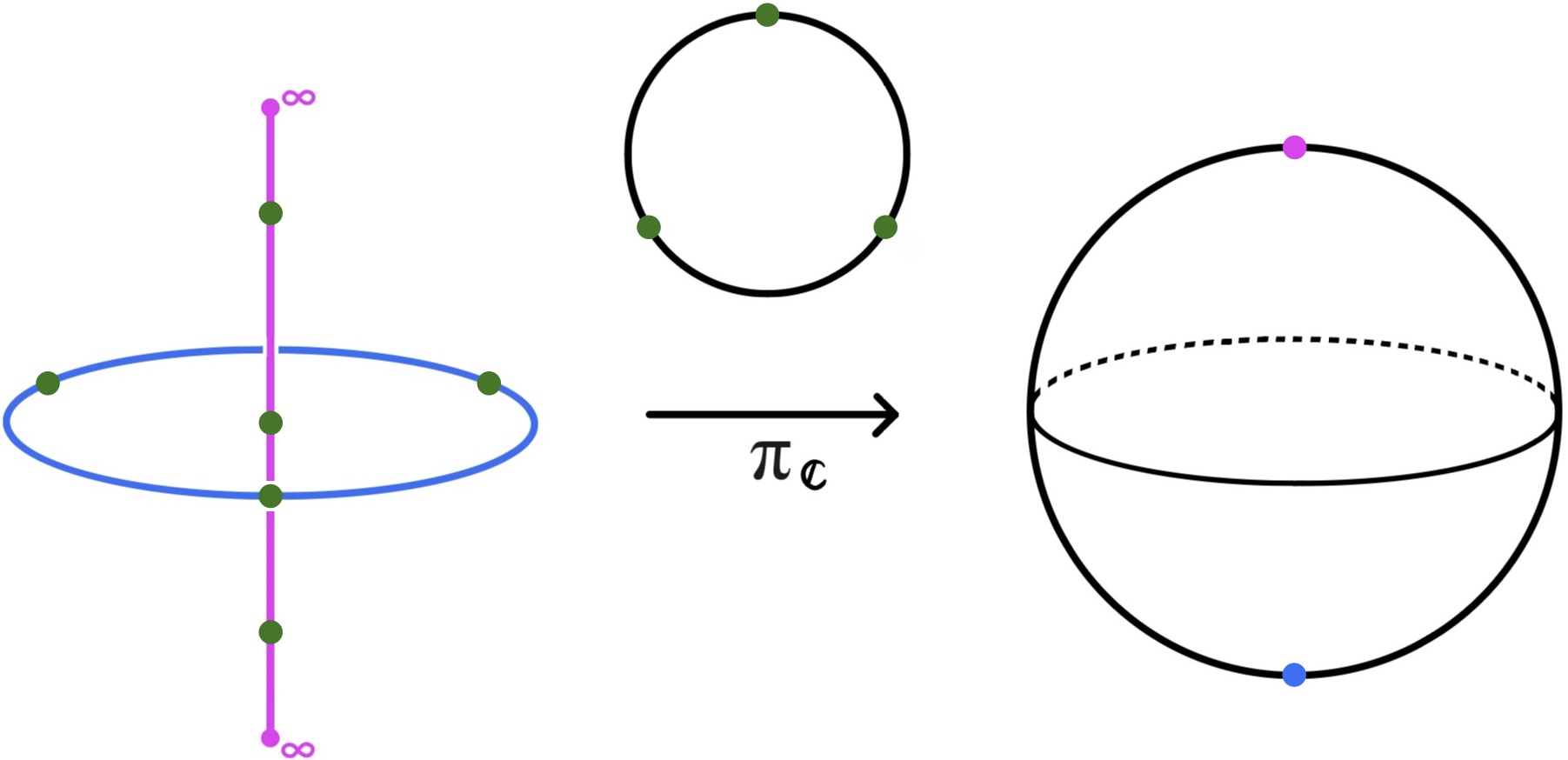}
\caption{\label{fig:HopfExample}A visualization of Example \ref{ex:okudaex}, where we have that $Y=\{(1,0),(-1,0)\}\subset S^2$ is a 1-design and each $ Z_y$, both of which are sets of vertices of equilateral triangles, is a 2-design on $S^1$. Elements of $X$, a 2-design on $S^3\cong\Bbb R^3\cup\{\infty\}$, are shown in green on the left.}
\end{center}
\end{figure}

Example \ref{ex:cohnex2} presents a family of 7-designs on $S^3$ discussed by Cohn, Conway, Elkies, and Kumar \cite{Cohn...06}. These authors also discuss the example of the $D_4$ root system, a 5-design on $S^3$ which consists of four sets of points which are each the vertices of a hexagon inscribed in a Hopf fiber. These Hopf fibers have images under the Hopf map which are the vertices of a tetrahedron on $S^2$, a 2-design \cite[Example 2.7]{BannaiBannai09}.
\begin{example} \label{ex:cohnex2}
As discussed by Bannai and Bannai \cite[Example 2.7]{BannaiBannai09}, the vertices
\[O:=\{(1,0),(-1,0),(0,1),(0,-1),(0,e_1),(0,-e_1)\}\]
of a regular octahedron on $S^2\subset\Bbb R\times\Bbb C$ (pictured in Figure \ref{fig:designex}) constitute a $3$-design. We also know that the vertices $V_8$ of a regular $8$-gon are a $7$-design on $S^1$. The $\Bbb C$-Hopf setting of Theorem \ref{thm:bigthm} then gives us that
\begin{equation*}
X:=\bigl\{z_ye^{2m\pi i/8}\:\big|\:y\in O,\:m\in\{0,\dots,7\}\bigr\}
\end{equation*}
is a $7$-design for any base points $z_y\in\pi_\Bbb C^{-1}(y)$ $(y\in O)$. We may then use \eqref{eq:explicitfiber} to pick these base points, giving rise to a concrete family of 7-designs on $S^3$. Figure \ref{fig:CohnExample} visually describes this example.
\end{example}

\begin{figure}
\begin{center}
\includegraphics[width=.8\textwidth]
{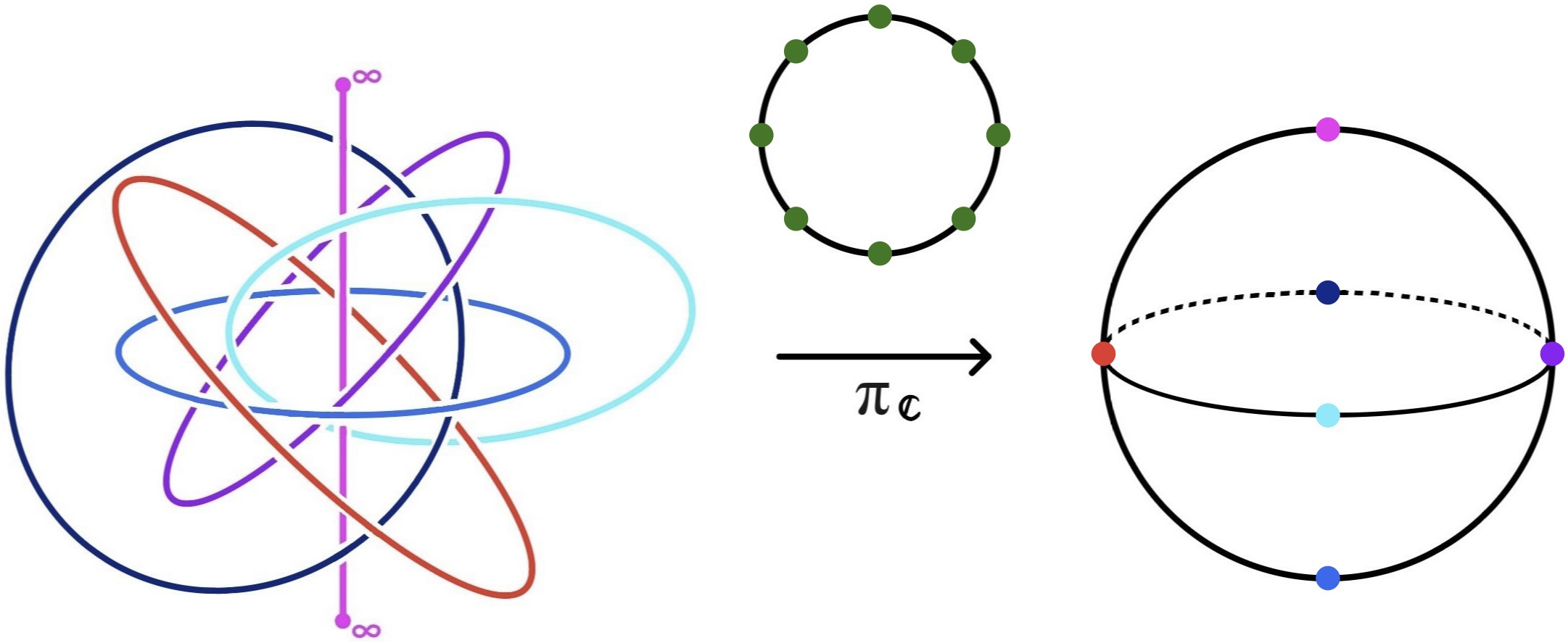}
\caption{\label{fig:CohnExample}The Hopf fibers in $S^3\cong\Bbb R^3\cup\{\infty\}$ corresponding to the vertices of an octahedron on $S^2$. To construct the family of 7-designs described in Example \ref{ex:cohnex2}, distribute points corresponding to the vertices of an octagon on each marked fiber.} \end{center}
\end{figure}

The 24-point 5-design and 48-point 7-design discussed by Cohn, Conway, Elkies, and Kumar \cite{Cohn...06} are putatively optimally small among designs on $S^3$ of their respective strengths. Table \ref{tab:hopf} details when $t$-designs minimally small among those constructed as in the $\Bbb C$-Hopf setting of Theorem \ref{thm:bigthm} have optimal size among all known $t$-designs on $S^3$.
\begin{table}[h]
\centering
\caption{We here have notation as in Theorem \ref{thm:bigthm}, where $Y$ is putatively optimally small among $\lfloor t/2\rfloor$-designs on $S^2$ and we build $X$ taking $Z_y=V_{t+1}$---an optimally small $t$-design on $S^1$ \cite[Example 5.1.4]{Delsarte...77}---for each $y\in Y$. The values $|Y|$ come from work of Hardin and Sloane \cite[Table I]{HardinSloane02} and putatively optimal minimal sizes of $t$-designs on $S^3$ were aggregated by Sloane, Hardin, and Cara \cite[Table 1]{Sloane...03}. To the knowledge of the author, these designs are only known to be optimal for $t=0$ trivially, for $\lfloor t/2\rfloor\in\{1,2,3,5\}$ on $S^2$ and $t\in\{1,2,3\}$ on $S^3$ by the tightness bounds of Delsarte, Goethals, and Seidel \cite[Definition 5.13]{Delsarte...77}, and when $t=11$ on $S^3$ by work of Andreev \cite[Theorem 1]{Andreev00}. While not necessarily optimal, $t$-designs on $S^3$ found by Womersley \cite{Womersley17} may similarly indicate the efficiency of the Hopf construction of Theorem \ref{thm:bigthm} for other values of $t$.} \label{tab:hopf}
\begin{tabular*}{.8\textwidth}{@{\extracolsep{\fill}} *{4}{c}}
\toprule
$t$ & $|Y|$ & $|X|$ & Putatively optimal minimum \\ 
\midrule
0 & 1 & 1 & 1 \\  
1 & 1 & 2 & 2 \\
2 & 2 & 6 & 5 \\  
3 & 2 & 8 & 8 \\
4 & 4 & 20 & 20 \\  
5 & 4 & 24 & 24 \\
6 & 6 & 42 & 42 \\  
7 & 6 & 48 & 48 \\  
8 & 12 & 108 & 96 \\ 
11 & 12 & 144 & 120 \\
\bottomrule
\end{tabular*}
\end{table}

Also note that, as Chen, Frommer, and Lang computationally verified the existence of $t$-designs on $S^2$ with $(t+1)^2$ points for $t\in\{0,...,100\}$ \cite{Chen...11}, we may apply the $\Bbb C$-Hopf setting of Theorem \ref{thm:bigthm} to verify existence of $t$-designs on $S^3$ with $(\lfloor t/2\rfloor+1)^2(t+1)$ points for $t\in\{0,...,201\}$.

Applying Corollary \ref{rmk:hcorr}, we get examples of the complex projective setting of Theorem \ref{thm:bigthm} from Example \ref{ex:okudaex}, Example \ref{ex:cohnex2}, and the cases discussed in Table \ref{tab:hopf}.
We now give other examples of the projective settings of the theorem. 

With $\Bbb K$, $k$, $n$, and $d$ as in \eqref{eq:Kknd} and
\begin{equation*}\label{eq:bF}
B^d:=\{(1,0,\dots,0),(0,1,0,\dots,0),\dots,(0,\dots,0,1)\}\subset S^d,
\end{equation*}
$\Pi_\Bbb K(B^d)$ is a $1$-design on $\Bbb{KP}^n$ for $\Bbb K\in\{\Bbb R,\Bbb C,\Bbb H,\Bbb O\}$. This can be seen (when $\Bbb K\in\{\Bbb C,\Bbb H\}$) from the fact that a set $\{[w_1],\dots,[w_N]\}$ is a 1-design on $\Bbb{KP}^n$ if
\[\frac1N\sum_{i=1}^Nw_i\overline w_i^t=\frac{1}{n+1}I_{n+1},\]
where $I_{n+1}$ is the identity on $\Bbb K^{n+1}$ \cite[Section 2]{Cohn...16}.

\begin{example}\label{ex:ex1}
As $\{\pm1\}=S^0$ is (trivially) a $t$-design on $S^0$ for any $t$, 
\begin{equation*}\label{eq:X_R}
\begin{split}
X^{n}:&=\bigl\{\pm b\:\big|\:b\in B^n\bigr\}=\{(\pm1,0,\dots,0), (0,\dots,0,\pm1)\}
\end{split}
\end{equation*}
is a $3$-design on $S^{n}$ by the real projective setting of Theorem \ref{thm:bigthm}.
\end{example}
For $\Bbb K\in\{\Bbb C,\Bbb H,\Bbb O\}$, $X^d$ may similarly be built from the 1-design $\Pi_\Bbb K(B^d)\subset\Bbb{KP}^n$ and the 3-design $X^k\subset S^k$ using the $\Bbb K$-projective construction of Theorem \ref{thm:bigthm}. The sets $X^m$ are known to be \emph{tight} 3-designs \cite{BannaiBannai09}, where a $t$-design is called tight if it achieves the lower bound on the size of such a design provided by Delsarte, Goethals, and Seidel \cite[Definition 5.13]{Delsarte...77} in the spherical setting or the analogous bounds of Dunkl \cite{Dunkl79} and of Bannai and Hoggar \cite[Proposition 1.1]{BannaiHoggar85} in the projective setting.

A number of tight projective $t$-designs were presented by Hoggar \cite[Table 3]{Hoggar82} and Levenshtein compiled these and other projective designs into a list \cite[Table 9.2]{Levenshtein92}. Tight designs outlined by Hoggar include a 40-point 3-design on $\Bbb{CP}^3$ \cite[Example 6]{Hoggar82}, a 126-point 3-design on $\Bbb{CP}^5$ \cite[Example 7]{Hoggar82}, and a 165-point 3-design on $\Bbb{HP}^5$ \cite[Example 9]{Hoggar82}. 
Noting that $V_{t+1}$ as in \eqref{eq:ngon} is a tight t-design on $S^1$ for any $t\in\Bbb N_{+}$ \cite[Theorem A]{Hong82} and that the 48-point 7-design on $S^3$ discussed by Cohn, Conway, Elkies, and Kumar \cite{Cohn...06} as an initial example of a design built as in the complex Hopf construction of Theorem \ref{thm:bigthm} is conjectured to be an optimally small 7-design on $S^3$ \cite[Table 1]{Sloane...03}, we get from Theorem \ref{thm:bigthm} a 320-point 7-design on $S^7$, a 1,008-point 7-design on $S^{11}$, and a 7,920-point 7-design on $S^{23}$. While each of these designs is putatively optimally small among 7-designs on their respective spheres built using Theorem \ref{thm:bigthm}, these designs are not themselves tight---in fact, tightness is rare among designs built on $S^d$ as in Theorem \ref{thm:bigthm} for $\Bbb K\in\{\Bbb C,\Bbb H,\Bbb O\}$.

Taking $\Bbb K$, $k$, $n$ and $d$ as in \eqref{eq:Kknd} alongside $m\in\Bbb N_+$ and $t\in\Bbb N$, say $N(m,t)$ and $N(\Bbb K,n,t)$ are the minimal numbers of points required to form a $t$-design on $S^m$ and on $\Bbb{KP}^n$ respectively. Work of Bondarenko, Radchenko, and Viazovska \cite[Theorem 1]{Bondarenko...13} combined with the tightness bounds of Delsarte, Goethals, and Seidel \cite[Theorems 5.11, 5.12]{Delsarte...77} shows that $N(m,t)\asymp t^m$ as $t\to\infty$ and work of Etayo, Marzo, and Ortega-Cerd\`a \cite[Theorem 2.2]{Etayo...18} combined with the tightness bounds of Dunkl \cite{Dunkl79} shows that $N(\Bbb K,n,t)\asymp t^{d-k}$ as $t\to\infty$.
This demonstrates that each construction of Theorem \ref{thm:bigthm} is asymptotically optimal as $t\to\infty$, in the sense that an optimally small $t$-design $X_t$ capable of being built on $S^d$ by such a construction will have size 
\[|X_t|=N(\Bbb K,n,\lfloor t/2\rfloor)N(k,t)\asymp t^{d-k}t^k=t^d\quad\text{as}\quad t\to\infty.\]

\section*{Acknowledgements}
\label{sec:thanks}

The author would like to thank Henry Cohn for suggesting this problem and providing helpful insights throughout the research process, as well as the reviewer for helpful comments. The author would also like to thank the School of Science, the Department of Mathematics, and the Office of Graduate Education for the MIT Dean of Science fellowship for support in their doctoral studies and would like to acknowledge NSF grant DMS-2105512 and the Simons Foundation Award \#994330 (Simons Collaboration on New Structures in Low-Dimensional Topology) for their support.

\bibliography{Bibliography.bib}

\end{document}